\title{Canonical forests in directed families}
\author{Joseph Flenner}
\author{Vincent Guingona}
\date{\today}
\address{University of Notre Dame \\
Department of Mathematics \\ 
255 Hurley Hall \\ 
Notre Dame, IN 46556 \\
U.S.A.}
\email{jflenner@nd.edu}
\email{guingona.1@nd.edu}
\thanks{Both authors were partially supported by the NSF}
\newtheorem{thm}{Theorem}[section]
\newtheorem{cor}[thm]{Corollary}
\newtheorem{lem}[thm]{Lemma}
\theoremstyle{remark}
\theoremstyle{definition}
\renewcommand{\phi}{\varphi}
\newcommand{\Th}{\mathrm{Th} }
\newcommand{\Lev}{\mathrm{Lev}}
\newcommand{\Sub}{\mathrm{Sub}}
\newcommand{\Ch}{\mathrm{Ch}}
\newcommand{\fM}{\mathfrak{M}}
\newcommand{\fU}{\mathfrak{U}}
\newcommand{\cS}{\mathcal{S}}
\newcommand{\cT}{\mathcal{T}}
\newcommand{\cB}{\mathcal{B}}
\newcommand{\cP}{\mathcal{P}}
\newcommand{\bbN}{\mathbb{N}}
\newcommand{\bbZ}{\mathbb{Z}}
\newcommand{\set}[2]{\left\{ #1\ \middle|\  #2\right\}}
\subjclass[2010]{Primary: 06A07, 03C45.}
\keywords{Directed family of sets, swiss cheese decomposition, VC-minimality, elimination of imaginaries}
\begin{document}

\begin{abstract}
Two uniqueness results on representations of sets constructible in a directed family of sets are given. In the unpackable case, swiss cheese decompositions are unique. In the packable case, they are not unique but admit a quasi-ordering under which the minimal decomposition is unique. Both cases lead to a one-dimensional elimination of imaginaries in VC-minimal and quasi-VC-minimal theories.
\end{abstract}

\maketitle

\section{Introduction}

In this paper, we study canonical forms for sets constructible from a directed family of sets, in the sense of Adler \cite{adl}. Every such set is realizable as a disjoint union of swiss cheeses, balls with (finitely many) holes removed. Even after imposing some nontriviality conditions on this decomposition, however, the uniqueness of such a presentation can fail. The dividing line is given by the notion of packability. Without packability, we show in Section \ref{s2} that the swiss cheese decomposition is unique. Even in the presence of packability, however, it is possible to canonically choose an `optimal' decomposition. This is described in Section \ref{s3}.

Directed families arise in logic as the building blocks of the VC-minimal theories. Introduced by Adler in 2008, these theories have garnered interest for being well-situated in the realm of model-theoretic tameness. Classical examples such as strongly minimal and o-minimal theories are VC-minimal. On the other hand, VC-minimal theories can also be seen as a natural `simplest case' among the dependent theories. Some fundamental model-theoretic machinery has already been developed, for example Cotter and Starchenko's recent analysis of forking in VC-minimal theories \cite{cs}.

A prototypical example is given by algebraically closed valued fields, from which much of the language of directed families is derived. Our primary goal is to give suitable generalizations of Holly's study of definable sets in algebraically closed valued fields \cites{hol1,hol2}, including the elimination of imaginaries in one dimension which is detailed in Section \ref{s4}. We also point out how these results can be adapted to the somewhat weaker quasi-VC-minimal setting.

Throughout the paper, we work in a directed family of sets as defined below. It should be acknowledged that this notion is not in accordance with some other uses of the term `directed', such as in category theory.

\subsection{Directed families}

For any set $\fU$ and $\cB\subseteq\cP(\fU)\setminus\left\{\emptyset\right\}$, $\cB$ is \emph{directed} if, for all $B_0,B_1\in\cB$, one of the following holds:
\begin{enumerate}[(i)]
\item
$B_0\subseteq B_1$
\item
$B_1\subseteq B_0$
\item
$B_0\cap B_1=\emptyset$.
\end{enumerate}
$\fU$ is the \emph{universe} of $\cB$. The members of $\cB$ are called \emph{balls}, and a \emph{constructible set} is a (finite) boolean combination of balls.

If $\cB$ is directed, then $\left(\cB,\subseteq\right)$ is easily seen to be a forest, that is, a union of trees whose roots are the maximal sets in $\cB$. Moreover, if $\cB$ is directed, then so is $\cB\cup\left\{\fU\right\}$. Thus we may assume that $\fU\in\cB$, as will generally be necessary.

\section{Swiss cheese and unpackability}
\label{s2}

In this section, we study the representation of constructible sets as boolean combinations of balls and the relation of these representations to unpackability. A directed family is called \emph{unpackable} if no ball is a finite union of proper sub-balls. 

Fix a directed family $\cB\subseteq\cP(\fU)\setminus\{\emptyset\}$ with $\fU\in\cB$. A \emph{swiss cheese} is a subset of $\fU$ of the form $S=A\setminus\left(B_1\cup\ldots\cup B_n\right)$, where $A$ is a ball and $B_1,\ldots,B_n\subsetneq A$ are proper sub-balls of $A$. In this expression, $n$ may be $0$ but it is usually assumed that the expression is nonredundant in the sense that for no $i\neq j$ is $B_i\subseteq B_j$. In this case, $A$ is called a \emph{wheel} of $S$ and each $B_i$ is a \emph{hole}. Note that this notion of wheels and holes is not immediately intrinsic to the set $S$, but depends on its presentation as a swiss cheese. In fact, we will show in Theorem \ref{t12} that as long as the holes are pairwise disjoint, the unique determination of wheels and holes is equivalent to unpackability.

If $X\subseteq \fU$ is constructible, a \emph{swiss cheese decomposition} of $X$ is a finite collection of swiss cheeses $S_1,\ldots,S_n$ such that
\begin{enumerate}[(i)]
\item
$S_i\cap S_j=\emptyset$ for $i\neq j$,
\item
for no $i,j$ is a wheel of $S_i$ equal to a hole of $S_j$, and
\item
$X=S_1\cup\ldots\cup S_n$.
\end{enumerate}
We again allow $n=0$, so that $\emptyset$ always has the empty swiss cheese decomposition.

\begin{lem}
\label{l11}
Every constructible subset $X\subseteq\fU$ has a swiss cheese decomposition.
\end{lem}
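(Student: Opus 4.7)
The plan is to write $X$ as a boolean combination of a finite family of balls, use the directedness hypothesis to extract a canonical partition of $\fU$ into ``atomic'' swiss cheeses, observe that $X$ must be a union of such atoms, and finally consolidate to satisfy condition (ii) of the decomposition.

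First I would fix a finite $\mathcal{F} \subseteq \cB$ containing $\fU$ such that $X$ is a boolean combination of members of $\mathcal{F}$. By directedness, any two elements of $\mathcal{F}$ are either nested or disjoint, so $\mathcal{F}$ is a finite forest under $\subseteq$ with root $\fU$. For each $B \in \mathcal{F}$, let $\mathrm{max}(B)$ be the set of maximal proper sub-balls of $B$ lying in $\mathcal{F}$, and set
\[
S_B \;=\; B \,\setminus\, \bigcup \mathrm{max}(B).
\]
Each $S_B$ is a swiss cheese (with wheel $B$ and holes $\mathrm{max}(B)$), and the presentation is nonredundant because any two distinct elements of $\mathrm{max}(B)$ are disjoint by directedness. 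Moreover, $x \in S_B$ iff $B$ is the smallest member of $\mathcal{F}$ containing $x$, so the nonempty $S_B$ form a partition of $\fU$.

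Next I would show $X$ is a union of such atoms. For any $B, B' \in \mathcal{F}$, directedness forces one of $B \subseteq B'$, $B' \subsetneq B$, or $B \cap B' = \emptyset$. In the first case $S_B \subseteq B'$; in the second, $B'$ sits below some element of $\mathrm{max}(B)$, so $S_B \cap B' = \emptyset$; in the third, $S_B \cap B' = \emptyset$. Thus each $S_B$ is either contained in or disjoint from every member of $\mathcal{F}$, hence from every boolean combination of them. Since $X$ is such a combination, $X$ is the union of the atoms $S_B$ it meets. Discarding the empty atoms, we obtain a collection satisfying conditions (i) and (iii).

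Finally, to enforce (ii), I would iteratively merge: whenever two swiss cheeses $S, S'$ in the current collection satisfy that the wheel $B$ of $S$ is also a hole of $S'$ (with wheel $B'$), replace them by $S \cup S' = B' \setminus \bigl((\mathrm{max}(B') \setminus \{B\}) \cup \mathrm{max}(B)\bigr)$. This set is again a (nonredundant) swiss cheese with wheel $B'$: any $C \in \mathrm{max}(B)$ and $C' \in \mathrm{max}(B') \setminus \{B\}$ are siblings in the forest $\mathcal{F}$ at different levels, and arguing by directedness as above shows none is contained in another. Each merge strictly decreases the number of swiss cheeses, so the process terminates; at that point (ii) holds. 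The principal obstacle is the verification that the merged expression remains a nonredundant swiss cheese and that the wheel/hole structure does not generate new violations elsewhere in the decomposition — but since wheels of distinct swiss cheeses remain distinct and are only ever enlarged to ancestors in $\mathcal{F}$, the process is well-controlled.
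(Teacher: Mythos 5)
Your proof is correct, but it reaches the initial disjoint decomposition by a genuinely different route than the paper. The paper writes $X$ in disjunctive normal form, collapses each conjunct to a single swiss cheese using directedness, and then merges overlapping cheeses pairwise by a case analysis on how wheels and holes interleave; only its final step, absorbing a cheese whose wheel is a hole of another, coincides with yours. You instead take the finite forest $\mathcal{F}$ of all balls appearing in some boolean representation of $X$ (with $\fU$ adjoined) and partition $\fU$ into the atoms $S_B=B\setminus\bigcup\{\text{maximal proper sub-balls of $B$ in $\mathcal{F}$}\}$; since each atom is contained in or disjoint from every member of $\mathcal{F}$, hence from every boolean combination of them, $X$ is exactly the union of the atoms it contains and disjointness comes for free. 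This buys a cleaner argument that skips the paper's overlap-merging step entirely, and it anticipates the operator $\Ch$ which the paper only introduces after the lemma (your atoms are the level-by-level version of $\Ch$, taken on all levels rather than only the even ones). Two small points of care in your last step: the displayed merge formula, written with the sets $\mathrm{max}(B)$ and $\mathrm{max}(B')$, is literally valid only for the first merge, since after a merge a cheese's hole set is no longer of that form; but the same computation works whenever each cheese's holes are pairwise disjoint balls, an invariant your merges preserve (the new holes are the holes of $S$, which lie inside $B$, together with the remaining holes of $S'$, which are disjoint from $B$). Also, a merge can create a new wheel-equals-hole violation (a child of $B$ may be the wheel of a third cheese in the collection), but this is harmless: your termination argument relies only on the number of cheeses strictly decreasing, not on violations never reappearing, exactly as in the paper's own iterative final step.
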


\begin{proof}
To begin, write $X$ as a boolean combination of balls in the form
\[
X=\bigcup\limits_{i=1}^m\bigcap\limits_{j=1}^{n_i} B_{i,j}^{e(i,j)}
\]
(where $e(i,j)\in\left\{0,1\right\}$, $B^1=B$, and $B^0=\fU\setminus B$ is the complement of $B$). Considering one of the disjuncts $\bigcap_{j=1}^{n_i}B_{i,j}^{e(i,j)}$, let $C_1,\ldots,C_r$ be those balls that appear positively (i.e. $B_{i,j}$ for $e(i,j)=1$) and $D_1,\ldots,D_s$ those that appear negatively. Due to the assumption that the universe $\fU$ is a ball, we can always assume that $r\ge 1$.

By the intersection property of balls, either $C_1\cap\ldots\cap C_r=\emptyset$ or $C_1\cap\ldots\cap C_r=C_i$ for some $i$. In the former case, the disjunct is empty and can be removed altogether. In the latter case, we write $C=C_i$ and have
\[
\bigcap\limits_{j=1}^{n_i}B_{i,j}^{e(i,j)}=C\setminus\left(D_1\cup\ldots\cup D_s\right).
\]
If this is nonempty, it is a swiss cheese.

We have thus shown that $X$ is a union of swiss cheeses. Next suppose two of these swiss cheeses have nonempty intersection, $S_1\cap S_2\neq\emptyset$. Writing
\[
S_i=A_i\setminus\left(B_{i,1}\cup\ldots\cup B_{i,r_i}\right)
\]
with $B_{i,j}\cap B_{i,k}=\emptyset$ for $j\neq k$,
$A_1\cap A_2\neq\emptyset$ implies $A_1\subseteq A_2$ or $A_2\subseteq A_1$. Say the latter. Now let $C_1,\ldots,C_s$ be a list of all those balls $C\in\left\{B_{i,j}\right\}$ such that one of
\begin{itemize}
\item
$C=B_{1,j}\subseteq A_1\setminus A_2$
\item
$C=B_{2,j}$ and $C\subseteq B_{1,k}$ for some $j\leq r_2,\ k\leq r_1$---i.e., $C$ is a hole of $S_2$ contained in a hole of $S_1$
\item
$C=B_{1,j}$ and $C\subseteq B_{2,k}$ for some $j\leq r_1,\ k\leq r_2$---i.e., $C$ is a hole of $S_1$ contained in a hole of $S_2$
\end{itemize}
holds. Then it is straightforward to verify that $S_1\cup S_2=A_1\setminus\left(C_1\cup\ldots C_s\right)$. By repeating this process as necessary, we may now assume that we have written $X$ as a \emph{disjoint} union of swiss cheeses: $X=S_1\cup\ldots\cup S_n$, $S_i\cap S_j=\emptyset$ for $i\neq j$.

Finally, suppose that the wheel of one of these is a hole of another,
\begin{align*}
S_i=&A_i\setminus\left(B_1\cup\ldots\cup B_r\right)\\
S_j=&A_j\setminus\left(A_i\cup C_1\cup\ldots\cup C_s\right).
\end{align*}
It may of course be assumed that $A_i\cap C_k=\emptyset$ for every $k\leq s$ (recalling that nonredundancy is assumed in the definition of holes). But in this case, clearly 
\[
S_i\cup S_j=A_j\setminus\left(B_1\cup\ldots\cup B_r\cup C_1\cup\ldots\cup C_s\right).
\]
Again, this process of elimination can be repeated until arriving at a swiss cheese decomposition of $X$.
\end{proof}

Swiss cheese decompositions can be canonically derived from any finite set of balls. Consider such a set $\cS$. Then $\cS$ is partially ordered by $\subseteq$, and as noted in the introduction this ordering is a (finite) forest. Define the levels $\Lev_n(\cS)$ inductively for $n\geq 0$ by:
\[
\Lev_n(\cS)=\set{B\in\cS}{\text{$B$ is $\subseteq$-maximal in $\cS\setminus\bigcup\limits_{i<n}\Lev_i(\cS)$}}.
\]
For convenience, define also $\lambda(B,\cS)=n$ if $B\in\Lev_n(\cS)$ and, for $B\in\cS$,
\[
\Sub(B,\cS)=\set{C\in\Lev_{\lambda(B,\cS)+1}(\cS)}{C\subseteq B}.
\]
Then from $\cS$ we construct the swiss cheese decomposition
\[
\Ch(\cS)=\bigcup\limits_{B\in\Lev_{2n}(\cS)}B\setminus\left(\bigcup\limits_{C\in\Sub(B,\cS)}C\right).
\]
In other words, the balls on even levels are the wheels and the holes are the wheels' immediate predecessors in $\left(\cS,\subseteq\right)$.

\begin{figure}[h]
$\xymatrix@=1pt{ & A_1\ar@{-}[dl]\ar@{-}[dr] & & & A_2\ar@{-}[d] & & & & &\\
B_1 & & B_2\ar@{-}[dl]\ar@{-}[dr] & & B_3 & \ar@{|->}[rrrr]^-\Ch & & & & \txt{\hspace{5pt}
$A_1\setminus\left(B_1\cup B_2\right)\ \cup\ C_1$\\
 \hspace{15pt}$\cup\ C_2\ \cup\ A_2\setminus B_3$
} \\
 & C_1 & & C_2 & & & & & &
}$
\caption{The swiss cheese operator}
\end{figure}

However, while constructible sets always admit a swiss cheese decomposition, and such a decomposition can always be realized as $\Ch(\cS)$ for suitable choice of $\cS$, it may occur that $\Ch(\cS)=\Ch(\cT)$ even though $\cS\neq\cT$. The notion of unpackability is central in obtaining uniqueness of the decomposition. In fact, it is equivalent. In the next theorem, (4) was first proved by Holly \cite{hol1} in the case of algebraically closed valued fields. The observation that Holly's theorem holds in this more general unpackable setting is due to Dolich (unpublished).

\begin{thm}
\label{t12}
For a directed family $\cB$ containing its universe $\fU$, the following are equivalent:
\begin{enumerate}
\item
$\cB$ is unpackable.
\item
If $A,B_1,\ldots,B_n$ are balls such that $A\subseteq\bigcup\limits_{i=1}^nB_i$, then $A\subseteq B_i$ for some $i$.
\item
If 
\[
S=A_1\setminus\left(B_{1,1}\cup\ldots\cup B_{1,m}\right)=A_2\setminus\left(B_{2,1}\cup\ldots\cup B_{2,n}\right)
\] 
is a swiss cheese and $B_{i,j}\cap B_{i,k}=\emptyset$ for $i\in\left\{1,2\right\}$, $j\neq k$, then $A_1=A_2$ and 
\[
\left\{B_{1,1},\ldots,B_{1,m}\right\}=\left\{B_{2,1},\ldots,B_{2,n}\right\}.
\]
\item
Every constructible set admits a unique swiss cheese decomposition.
\end{enumerate}
\end{thm}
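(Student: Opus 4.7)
The plan is a cyclic proof: I will establish $(1) \Leftrightarrow (2)$ and the short implications $(2) \Rightarrow (3) \Rightarrow (1)$ and $(4) \Rightarrow (1)$ directly, and reserve the substantive work for $(2) \Rightarrow (4)$.

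For $(2) \Rightarrow (1)$: $A = B_1 \cup \ldots \cup B_n$ with $B_i \subsetneq A$ yields $A \subseteq \bigcup_i B_i$, so (2) forces $A \subseteq B_j$, contradicting properness. For $(1) \Rightarrow (2)$, I use directedness to replace each $B_i$ (assuming no $B_j \supseteq A$) by either $A \cap B_i$ (a proper sub-ball) or the empty set, reducing to a covering of $A$ by proper sub-balls, which unpackability forbids. For $(2) \Rightarrow (3)$, I apply (2) to $A_2 \subseteq A_1 \cup \bigcup_k B_{2,k}$ (WLOG $A_1 \subseteq A_2$) to force $A_1 = A_2$, then to each $B_{1,j} \subseteq \bigcup_k B_{2,k}$, matching the two hole families bijectively via disjointness. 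Packability supplies counterexamples for $(3) \Rightarrow (1)$ and $(4) \Rightarrow (1)$: the identity $B_1 = B_1 \setminus \emptyset = A \setminus (B_2 \cup \ldots \cup B_n)$ gives two swiss cheese presentations of $B_1$, while $\{A\}$ and $\{B_1, \ldots, B_n\}$ give two distinct decompositions of $A$, both trivially satisfying (ii) since neither has any holes.

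The main obstacle is $(2) \Rightarrow (4)$, which I will prove by induction on $|\mathcal{D}_1| + |\mathcal{D}_2|$. Given two swiss cheese decompositions of $X$, let $A$ be $\subseteq$-maximal among all wheels in $\mathcal{D}_1 \cup \mathcal{D}_2$; WLOG $A = A_i \in \mathcal{D}_1$. The key lemma, drawn from (2) and the disjointness of swiss cheeses within a decomposition, is that any wheel $A_l \subsetneq A_i$ in $\mathcal{D}_1$ must lie inside some hole $B_{i,j}$ of $S_i$: the disjointness $S_l \cap S_i = \emptyset$ gives $A_l \subseteq \bigcup_m B_{l,m} \cup \bigcup_j B_{i,j}$, and (2) together with $B_{l,m} \subsetneq A_l$ rules out the first option. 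Consequently, $A \cap X = S_i \sqcup \bigsqcup_{A_l \subsetneq A} S_l$ with the latter union sitting inside $\bigcup_j B_{i,j}$. If no wheel of $\mathcal{D}_2$ equals $A$, then by maximality each $A'_k$ meeting $A$ satisfies $A'_k \subsetneq A$, so $S_i \subseteq A \cap X \subseteq \bigcup A'_k$ yields $A \subseteq \bigcup_j B_{i,j} \cup \bigcup A'_k$, contradicting (2). Hence some $A'_k = A$, and both decompositions share $A$ as a wheel.

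It remains to show $S_i = T_k$, equivalently $\bigcup_j B_{i,j} = \bigcup_l B'_{k,l}$. Suppose for contradiction some hole $B_{i,j_0}$ is not contained in any $B'_{k,l}$; by (2) each $B'_{k,l}$ meeting $B_{i,j_0}$ lies strictly inside it. Restricting both decompositions to $B_{i,j_0}$ yields two swiss cheese decompositions of $X \cap B_{i,j_0}$: $\mathcal{D}_1^{(j_0)} = \{S_l : A_l \subseteq B_{i,j_0}\}$, whose wheels are all strictly inside $B_{i,j_0}$ by (ii) of $\mathcal{D}_1$, and $\mathcal{D}_2^{(j_0)} = \{T_k \cap B_{i,j_0}\} \cup \{T_{k''} : A'_{k''} \subsetneq B_{i,j_0}\}$, containing the swiss cheese $T_k \cap B_{i,j_0}$ with wheel $B_{i,j_0}$. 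The possibilities $B_{i,j_0} \subsetneq A'_{k''}$ and $A'_{k''} = B_{i,j_0}$ are ruled out by maximality of $A$ together with (2) applied to the disjointness requirements between $T_k$ and the other $T_{k''}$. Since the total size has strictly decreased, the induction hypothesis forces $\mathcal{D}_1^{(j_0)} = \mathcal{D}_2^{(j_0)}$, contradicting the mismatch between the wheel $B_{i,j_0}$ appearing in $\mathcal{D}_2^{(j_0)}$ and the strictly smaller wheels in $\mathcal{D}_1^{(j_0)}$. Therefore $\bigcup_j B_{i,j} = \bigcup_l B'_{k,l}$ and $S_i = T_k$; removing this common piece from both decompositions reduces the count and invokes the outer induction.
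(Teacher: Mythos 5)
Your proposal is correct, and while its overall skeleton (the same cycle of easy implications, plus an induction that strips off a matched swiss cheese for the hard uniqueness direction) matches the paper's, the two key sub-steps of that induction are executed differently. The paper inducts on the number $r$ of cheeses in one decomposition, matches a wheel of the first decomposition with one of the second by following the finite chain $A_1\subseteq B_i\subseteq A_j\subseteq\cdots$ until a ball repeats, and then proves $S_1=T_1$ directly: every point of a hole $A_{1,1}$ is either outside $X$, hence in a hole of $T_1$, or in some $S_j$ whose wheel lies strictly inside $A_{1,1}$, so (2) forces $A_{1,1}$ into a hole of $T_1$. You instead take the globally $\subseteq$-maximal wheel $A$, which via your key lemma gives a clean proof that $A$ is a wheel of both decompositions, and you match the holes by restricting both decompositions to a putatively bad hole $B_{i,j_0}$ and invoking the induction hypothesis --- which is exactly why you need the induction parameter $|\mathcal{D}_1|+|\mathcal{D}_2|$ rather than $r$. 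This route works; its cost is the verification, which you only assert, that the restricted families really are swiss cheese decompositions of $X\cap B_{i,j_0}$: coverage needs you to rule out $B_{i,j_0}\subsetneq A_l$ (your key lemma plus the pairwise disjointness of the holes of $S_i$) and $A_l=B_{i,j_0}$ (condition (ii)), and your identity $A\cap X=S_i\sqcup\bigsqcup_{A_l\subsetneq A}S_l$ tacitly uses the fact, provable from (2) and disjointness, that two distinct cheeses in one decomposition cannot share a wheel; the symmetric argument giving the reverse inclusion of the hole-unions and the trivial case where one decomposition is empty also deserve a sentence. All of these are straightforwardly fillable with the tools you cite, so there is no genuine gap; the paper's direct covering of holes is shorter and avoids checking that restrictions are decompositions, while your maximal-wheel choice makes the wheel-matching step somewhat cleaner than the paper's chain argument.
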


\begin{proof}
We first show the equivalence of (1-3), and then show the equivalence of these three conditions with (4).

{\bf 1$\Rightarrow$2:} Suppose $A\subseteq\bigcup_{i=1}^nB_i$. We may assume that $A\cap B_i\neq\emptyset$ for each $i$. But then either $B_i\subseteq A$ or $A\subseteq B_i$. If $B_i\subsetneq A$ for every $i$, then $A$ would be a finite union of proper sub-balls, contradicting unpackability. 

{\bf 2$\Rightarrow$3:} Since $A_1\subseteq A_2\cup B_{1,1}\cup\ldots\cup B_{1,m}$, (2) gives $A_1\subseteq A_2$ or $A_1\subseteq B_{1,i}$, some $i$. But the latter cannot occur as the holes in a swiss cheese are presumed to be \emph{proper} sub-balls of the wheel. Therefore $A_1\subseteq A_2$, and by symmetry, $A_1=A_2$. It follows that
\[
B_{1,1}\cup\ldots\cup B_{1,m}=B_{2,1}\cup\ldots\cup B_{2,n}
\]
and hence $B_{1,i}\subseteq B_{2,j}$, some $j\leq n$. Similarly, $B_{2,j}\subseteq B_{1,k}$. But here we must have $k=i$, since $B_{1,i}\cap B_{1,k}\neq\emptyset$ for $i\neq k$. Thus $B_{1,i}=B_{2,j}$ and
\[
\left\{B_{1,1},\ldots,B_{1,m}\right\}\subseteq\left\{B_{2,1},\ldots,B_{2,n}\right\}.
\]
Again, (3) follows by symmetry.

{\bf 3$\Rightarrow$1:} Suppose a ball $A$ were the disjoint union of proper sub-balls $B_1,\ldots,B_n$. Necessarily $n>1$. Then the swiss cheese
$B_1=A\setminus\left(B_2\cup\ldots\cup B_n\right)$
contradicts (3).

{\bf 4$\Rightarrow$1:} This is similarly clear. If, for example, the ball $A$ were the disjoint union of proper sub-balls $B_1,\ldots,B_n$, then $X=A$ could be decomposed either as simply $A$, or as the union of the collection $B_1,\ldots,B_n$.

{\bf 1$\Rightarrow$4:} In light of Lemma \ref{l11}, it remains only to prove uniqueness. Suppose we have two decompositions, $X=\bigcup\limits_{i=1}^rS_i=\bigcup\limits_{i=1}^sT_i$ with
\begin{align*}
S_i=A_i\setminus&\left(A_{i,1}\cup\ldots\cup A_{i,m_i}\right)\\
T_i=B_i\setminus&\left(B_{i,1}\cup\ldots\cup B_{i,n_i}\right).
\end{align*}
We work by induction on $r$. If $r=0$, then $X=\emptyset$. Since no $B_i$ can be the union of its proper sub-balls $B_{i,1},\ldots,B_{i,n_i}$, no $T_i$ can be empty. Thus, $s=0$ as well.

For $r>0$, note that $S_1\subseteq X$ implies
\[
A_1\subseteq\left(\bigcup\limits_{i=1}^sB_i\right)\cup\left(\bigcup\limits_{j=1}^{m_1}A_{1,j}\right).
\]
By (2), it follows that $A_1$ is a subset of one of these balls. But since $S_1\neq\emptyset$, we must have $A_1\subseteq B_i$ for some $i\leq s$. By the same reasoning, $B_i\subseteq A_j$ for some $j\leq r$. This can be repeated until one of the balls appears twice, giving equality. So, renumbering for convenience, let us say that $A_1=B_1$.

Now we claim that $S_1=T_1$. To this end, note first that if $S_1$ has no holes, then $S_1=A_1=B_1$, and $T_1$ can have no holes either. To see this, suppose we have a hole $B_{1,1}$ of $T_1$. Since $B_{1,1}\subseteq A_1\subseteq X$, $B_{1,1}\subseteq \bigcup_{i\neq 1}T_i$. By (2), $B_{1,1}\subseteq B_i$ for some $i\neq 1$. Since $B_{1,1}\neq B_i$ by definition of swiss cheese decomposition, $B_{1,1}\subsetneq B_i$. If $B_i\subseteq B_1$, since $T_1\cap T_i=\emptyset$, $B_i$ would be covered by the holes of $T_1$ and $T_i$, contradicting unpackability. If $B_1\subseteq B_i$, then $T_1\cap T_i=\emptyset$ implies that $T_1$ is contained in the holes of $T_i$. In this case, unpackability gives that $B_1$ is contained in a hole of $T_i$, contradicting $B_{1,1}\subseteq T_i$. These contradictions imply that $T_1$ has no holes and $S_1=T_1$.

Otherwise, suppose $S_1$ has at least one hole $A_{1,1}$. Every element $x\in A_{1,1}$ is either
\begin{itemize}
\item 
not in $X$, in which case $x\in A_1=B_1$ implies $x\in B_{1,i}$ for some $i$; or,
\item
in $S_j$ for some $j\neq 1$. In this case, since $S_1\cap S_j=\emptyset$ and $A_j\neq A_{1,1}$, as before we must have $A_j\subsetneq A_{1,1}$.
\end{itemize}
Altogether,
\[
A_{1,1}\subseteq \left(\bigcup\limits_{i=1}^{n_i} B_{1,i}\right)\cup\left(\bigcup\limits_{A_j\subsetneq A_{1,1}}A_j\right)
\]
from which (2) implies $A_{1,1}\subseteq B_{1,i}$ for some $i$. The same argument applies to the other holes of $S_1$ and $T_1$, with the result that
\[
A_{1,1}\cup\ldots\cup A_{1,m_1}=B_{1,1}\cup\ldots\cup B_{1,n_1}
\]
and $S_1=T_1$.

Finally, the induction hypothesis applied to
\[
S_2\cup\ldots\cup S_r=T_2\cup\ldots\cup T_s
\]
finishes the proof.
\end{proof}

Combining (3) and (4) of the above theorem, it follows that in an unpackable directed family both the swiss cheeses in a decomposition of a constructible set $X$ and the wheels and holes of these swiss cheeses are uniquely determined by $X$. We thus obtain:

\begin{cor}
If $\cB$ is an unpackable directed family with $\fU\in\cB$, and $X\subseteq \fU$ is constructible, then $X=\Ch(\cS)$ for a unique finite set of balls $\cS$.
\end{cor}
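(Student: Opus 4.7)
The plan is to combine the two uniqueness conclusions of Theorem~\ref{t12}. By part~(4), $X$ has a unique swiss cheese decomposition $X=S_1\cup\ldots\cup S_n$, and by part~(3) each $S_i$ uniquely determines its wheel $A_i$ and its pairwise-disjoint set of holes $B_{i,1},\ldots,B_{i,m_i}$. I would then set $\cS=\{A_i\}\cup\{B_{i,j}\}$ and argue that this set is both correct, i.e.\ $\Ch(\cS)=X$, and the only finite set of balls with this property.

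For existence, I must verify that in the forest $(\cS,\subseteq)$ the wheels $A_i$ occupy exactly the even levels and the holes $B_{i,j}$ occupy exactly the odd levels, so that the defining formula for $\Ch(\cS)$ reproduces the given decomposition. I would prove this by establishing two structural facts about the immediate-parent relation in $\cS$: the immediate parent of every hole $B_{i,j}$ is its own wheel $A_i$, and the immediate parent of every non-root wheel $A_i$ is a hole $B_{l,k}$ for some $l,k$. Since every hole is a proper sub-ball of a wheel already in $\cS$, no hole can be a root, so the roots are wheels; the two facts then force the parity of the levels to alternate in the required fashion.

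The main obstacle is checking these two immediate-parent statements, which is where directedness meets the hypotheses on a swiss cheese decomposition. The archetypal obstruction to rule out is a ball $C\in\cS$ with $B_{i,j}\subsetneq C\subsetneq A_i$; a case split on whether $C$ has the form $A_l$ or $B_{l,k}$, combined with directedness, sandwiches $C$ between two (possibly equal) holes of some common swiss cheese, and then either the disjointness of those holes or condition~(ii) in the definition of a swiss cheese decomposition (no wheel equals a hole) produces a contradiction. An analogous case analysis handles the parent of a non-root wheel, ruling out an intervening $A_l$ by producing a hole $B_{l,k}$ strictly in between.

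For uniqueness, suppose $\Ch(\cS')=X$ for some finite $\cS'\subseteq\cB$. By the very construction of $\Ch$, this presents a swiss cheese decomposition of $X$ whose wheels are the even-level balls of $\cS'$ and whose holes (per wheel) are that wheel's children in $\cS'$; unpackability ensures no such swiss cheese is empty, so it is a legitimate decomposition. Theorem~\ref{t12}(4) identifies it with $\{S_i\}$, and then Theorem~\ref{t12}(3) forces the wheels and sets of holes to be $\{A_i\}$ and $\{B_{i,j}\}$ respectively. Every ball of $\cS'$ lies at some level and is therefore either a wheel or a hole in this decomposition, so $\cS'=\{A_i\}\cup\{B_{i,j}\}=\cS$.
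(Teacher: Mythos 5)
Your proposal is correct and takes essentially the same route as the paper, which deduces the corollary by combining parts (3) and (4) of Theorem \ref{t12}: the unique swiss cheese decomposition together with the unique determination of wheels and holes yields the unique representing set of balls. You simply make explicit the details the paper treats as immediate, namely that the wheels and holes of the unique decomposition form a forest whose even levels are exactly the wheels (so that $X=\Ch(\cS)$, with unpackability, via condition (2), supplying the sandwiching step you describe), and that any $\cS'$ with $\Ch(\cS')=X$ induces a decomposition to which (4) and then (3) apply.
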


\section{Forests and packable families}
\label{s3}

For this section, fix a set $\fU$ and $\cB\subseteq \cP(\fU)\setminus\{\emptyset\}$ directed, not necessarily unpackable. We again require $\fU\in\cB$. Consider $\Ch$ as defined before. For any constructible $X \subseteq \fU$, by Lemma \ref{l11} there exists some finite $\cS\subseteq\cB$ so that $X = \Ch(\cS)$. In this case, we will say that $\cS$ \emph{represents} $X$. 

However, since $\cB$ is potentially packable, we may have $\cS, \cT \subseteq \cB$ distinct but $\Ch(\cS) = \Ch(\cT)$. Nevertheless, in this section we describe a way to choose a canonical $\cS$ representing $X$.

Define, on the set of all finite forests, a quasi-ordering $\unlhd$ so that $\cS\unlhd\cT$ iff:
\begin{enumerate}[(i)]
\item
$|\cS|=|\cT|$ and for all $n$, $|\Lev_n(\cS)|=|\Lev_n(\cT)|$; or,
 \item
$\left|\cS\right|<\left|\cT\right|$; or,
 \item
$\left|\cS\right| = \left|\cT\right|$ and for some $n$ and all $i<n$, $\left|\Lev_i(\cS)\right| = \left|\Lev_i(\cT)\right|$ but $\left|\Lev_n(\cS)\right|>\left|\Lev_n(\cT)\right|$.
\end{enumerate}
So, roughly speaking, forests are ordered first by cardinality then by top-heaviness.  

\begin{figure}[h]
$\xymatrix@=4pt{
&&*=0{\bullet}\ar@{-}[dd]&*=0{\bullet}&*=0{\bullet}&&*=0{\bullet}\ar@{-}[dd]&*=0{\bullet}\ar@{-}[dd]&&&*=0{\bullet}\ar@{-}[ddl]\ar@{-}[ddr]&&*=0{\bullet}&&&*=0{\bullet}\ar@{-}[ddl]\ar@{-}[dd]\ar@{-}[ddr]&&&&*=0{\bullet}\ar@{-}[dl]\ar@{-}[dr]&&&&*=0{\bullet}\ar@{-}[d]&&& \\
\cdots&\lhd&&&&\lhd&&&\approx&&&&&\lhd&&&&\lhd&*=0{\bullet}\ar@{-}[d]&&*=0{\bullet}&\lhd&&*=0{\bullet}\ar@{-}[dl]\ar@{-}[dr]&&\lhd&\cdots \\
&&*=0{\bullet}&&&&*=0{\bullet}&*=0{\bullet}&&*=0{\bullet}&&*=0{\bullet}&&&*=0{\bullet}&*=0{\bullet}&*=0{\bullet}&&*=0{\bullet}&&&&*=0{\bullet}&&*=0{\bullet}&&
}$
\caption{Some forests of size $4$, quasi-ordered by $\unlhd$}
\end{figure}

We use this order to get a uniqueness of decomposition result:

\begin{thm}\label{Thm_CanonicalDecomposition}
Let $X\subseteq\fU$ be constructible and $\Ch(\cS)=\Ch(\cT)=X$. If $(\cS,\subseteq)$ and $(\cT,\subseteq)$ are both $\unlhd$-minimal among all representatives of $X$ in $\cB$, then $\cS= \cT$.
\end{thm}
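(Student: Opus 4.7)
The plan is to argue by contradiction: suppose $\cS \neq \cT$ are both $\unlhd$-minimal representatives of $X$. By clause (i) of the definition of $\unlhd$, they share the same cardinality and the same level profile, so the contradiction must come from showing the level profile can be strictly improved. The key starting observation is that $\Ch(\cS) = X$ is equivalent to $\sum_{B \in \cS} \chi_B = \chi_X$ in $\mathbb{F}_2^{\fU}$: by directedness, the set $\{B \in \cS : x \in B\}$ is a chain for each $x \in \fU$, and unwinding the definition of $\Ch$ shows $x \in \Ch(\cS)$ iff this chain has odd length. Consequently $\mathcal{D} := \cS \triangle \cT$ is a nontrivial $\mathbb{F}_2$-dependency among characteristic functions of balls.

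I would next extract a ``packing dependency'' $\mathcal{D}_0 \subseteq \mathcal{D}$: take $A$ a $\subseteq$-maximal ball in $\mathcal{D}$ and let $D_1, \ldots, D_m$ be the maximal $\mathcal{D}$-sub-balls of $A$. For each $x \in A$ the parity identity forces an odd number of these sub-balls to contain $x$, so (by directedness and maximality giving pairwise disjointness) exactly one does, yielding the packing $A = D_1 \sqcup \cdots \sqcup D_m$ and $\mathcal{D}_0 := \{A, D_1, \ldots, D_m\} \subseteq \mathcal{D}$. Every ball of $\mathcal{D}_0$ lies in exactly one of $\cS, \cT$; without loss of generality $A \in \cS \setminus \cT$, and set $s = |\{i : D_i \in \cS\}|$, $t = |\{i : D_i \in \cT\}|$. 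The swaps $\cS' := \cS \triangle \mathcal{D}_0$ and $\cT' := \cT \triangle \mathcal{D}_0$ still represent $X$, and a direct count gives $|\cS'| - |\cS| = t - s - 1$ and $|\cT'| - |\cT| = s - t + 1$; minimality of both $\cS$ and $\cT$ in cardinality forces $t = s + 1$, so sizes are preserved.

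The remaining step is a level-profile contradiction, which is the main obstacle. Maximality of $A$ in $\mathcal{D}$ implies every strict super-ball of $A$ in $\cS$ lies in $\cS \cap \cT$ and hence also in $\cS'$; setting $\nu := \lambda(A, \cS)$, balls either disjoint from $A$ or strictly containing $A$ retain their super-ball sets and hence their levels, so all levels strictly below $\nu$ agree for $\cS$ and $\cS'$. In the clean case where no ball of $\cS \cap \cT$ sits strictly between any $D_i$ and $A$ and the $D_i$ have no further $\mathcal{D}$-descendants, each of the $t = s+1$ added $D_i$'s enters $\cS'$ at level $\nu$, giving $|\Lev_\nu(\cS')| = |\Lev_\nu(\cS)| + s$ with $s \geq 1$; by clause (iii) applied at $n = \nu$, $\cS' \lhd \cS$, contradicting the minimality of $\cS$. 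The hardest part is the general case, where intermediate $\cS \cap \cT$ balls between some $D_i$ and $A$, or deeper $\mathcal{D}$-structure below the $D_i$, distribute the added and removed balls across several levels. The plan for that case is to take a counterexample $(\cS, \cT)$ minimizing $|\cS \triangle \cT|$, so that any $\cS' \neq \cT$ is itself a smaller counterexample and hence $\mathcal{D}_0 = \mathcal{D}$, together with a careful bookkeeping argument that the contribution to level $\nu$ coming from the topmost intermediate $\cS \cap \cT$ balls (which shift down from level $\nu+1$) plus the $D_i \in \cT$ with $\mu_i = 0$ always strictly exceeds the removal of $A$, reducing the general setting to the clean one.
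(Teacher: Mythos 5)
Your parity reformulation is correct and is a genuinely different starting point from the paper (which inducts on $|\cS|$ and forces $\Lev_0(\cS)=\Lev_0(\cT)$ by an exchange carried out at the top of the forest): indeed $x\in\Ch(\cS)$ iff $x$ lies in an odd number of members of $\cS$, so $\mathcal{D}=\cS\triangle\cT$ is an $\mathbb{F}_2$-dependency, the packing $A=D_1\sqcup\cdots\sqcup D_m$ exists, $\cS'=\cS\triangle\mathcal{D}_0$ and $\cT'=\cT\triangle\mathcal{D}_0$ still represent $X$, minimal cardinality forces $t=s+1$ (and $s\ge 1$), levels below $\nu=\lambda(A,\cS)$ are unchanged, and your ``clean case'' argument works. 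The problem is that the general case is exactly where the theorem lives, and your plan for it has two unjustified steps. First, the reduction to $\mathcal{D}_0=\mathcal{D}$: for $(\cS',\cT)$ to be a smaller counterexample you need $\cS'$ to be $\unlhd$-minimal, but you only know $|\cS'|=|\cS|$; the swap could a priori make the level profile strictly worse, in which case $(\cS',\cT)$ is not a counterexample pair at all, so minimizing $|\cS\triangle\cT|$ does not by itself give $\mathcal{D}_0=\mathcal{D}$. Ruling that out is essentially the same bookkeeping you are trying to avoid.

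Second, the asserted inequality that the gain at level $\nu$ ``always strictly exceeds the removal of $A$'' is false as stated. Since $A\notin\cS'$, the balls entering $\Lev_\nu(\cS')$ from inside $A$ are precisely the maximal elements of the set of balls of $\cS'$ properly contained in $A$ (each has exactly the $\nu$ common super-balls of $A$ above it). If there is a single ball $E\in\cS\cap\cT$ with $E\subsetneq A$ containing every ball of $\cS'$ properly contained in $A$ (in particular all $D_i\in\cT$), then $\Lev_\nu(\cS')$ is just $\Lev_\nu(\cS)$ with $A$ replaced by $E$, and symmetrically $|\Lev_\nu(\cT')|=|\Lev_\nu(\cT)|$ with $E$ replaced by $A$; neither swap improves anything at level $\nu$, so clause (iii) gives no contradiction there and you are pushed into comparing deeper levels, which your sketch does not control. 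A contradiction does exist in that configuration, but by a different mechanism: when $\mathcal{D}=\mathcal{D}_0$, comparing parities of points of $A\setminus E$ in $\cS$ and $\cT$ shows $A\setminus E$ is covered by the $D_i\in\cS$ disjoint from $E$, hence $A$ equals the union of its children in $\cS$; removing $A$ together with $\Sub(A,\cS)$ then gives a representative of $X$ of strictly smaller size, contradicting minimality. That argument is not in your proposal, and it again leans on $\mathcal{D}=\mathcal{D}_0$, whose proof is gap (a). By contrast, the paper sidesteps all of this by inducting on $|\cS|$ and exchanging only between $\Lev_0(\cS)$ and $\Sub(C,\cT)$ for a top-level ball $C$ (using the identity $C\setminus\bigcup\cT'=\bigcup\cS'$), so the effect on the level profile is confined to levels $0$ and $1$ and a short four-case comparison of $|\cS'|$ with $|\cT'|+1$ finishes; to complete your approach you would need an analogous device confining the level changes, or a full multi-level lexicographic analysis, neither of which is yet supplied.
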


\begin{proof}
Fix $\cS, \cT \subseteq \cB$ as in the hypothesis.  If $N = |\cS| = 0$, then $\cS = \cT = \emptyset$ and we are done.  We proceed now by induction on $N$.
 
Assume that $N > 0$.  We aim to prove
 \[
  \Lev_0(\cS) = \Lev_0(\cT).
 \]
By induction, this suffices.  Define, for $B\in\cS$, $R_S(B) = B\setminus\bigcup \Sub(B, \cS)$, and similarly $R_T(C)$ for $C\in\cT$. For $B$ and $C$ in even levels, these are the swiss cheeses comprising the decompositions generated by $\cS$ and $\cT$.  Note that since $R_S(B)=\emptyset$ or $R_T(C)=\emptyset$ clearly violates minimality of $N$, we can rule this possibility out.
 
So, consider a ball $B \in \Lev_0(\cS)$.  Since $R_S(B)\subseteq\Ch(\cS)=\Ch(\cT)$, there must exist $C\in \Lev_{2n}(\cT)$ for some $n$ such that $R_S(B) \cap R_T(C) \neq \emptyset$.  It follows that $B \cap C \neq \emptyset$. If $C \notin \Lev_0(\cT)$, then replace $C$ with the ball containing it in $\Lev_0(\cT)$.  In other words, we have shown that for any $B\in\Lev_0(\cS)$, there is $C\in\Lev_0(\cT)$ such that $B\cap C\neq\emptyset$. Since $\cB$ is directed, $B \subseteq C$ or $C \subseteq B$. Likewise, for any $C\in\Lev_0(\cT)$, there is $B\in\Lev_0(\cS)$ such that $B\subseteq C$ or $C\subseteq B$.
 
Now, if we did not have $\Lev_0(\cS)=\Lev_0(\cT)$, then by the above observation there would be $B\in\Lev_0(\cS)$ and $C\in\Lev_0(\cT)$ such that $B\subsetneq C$ or $C\subsetneq B$. Thus say, for instance, that we have found $B\subsetneq C$. Note that, as $B\in\Lev_0(\cS)$, there can be no $B'\in\cS$ for which $C\subseteq B'$.
 
Define 
\begin{align*}
\cS' =& \set{B' \in \Lev_0(\cS)}{B' \subseteq C\text{, but $B'\nsubseteq C'$ for any $C'\in\Sub(C,\cT)$}}\\
\cT' =& \set{C'\in\Sub(C,\cT)}{C'\cap B'=\emptyset\text{ for all $B'\in\cS'$}}.
\end{align*}
 
{\bf Claim 1:} $C\setminus \bigcup\cT'=\bigcup\cS'$.
 
First, if $x\in\bigcup\cS'$, then $x\in C$. If $C'\in\cT'$, then $C'\cap\left(\bigcup\cS'\right)=\emptyset$ gives $x\notin C'$. In other words, $x\in C\setminus\bigcup\cT'$.
 
Conversely, suppose $x\in C\setminus\bigcup\cT'$. If $x\notin X$, then since $x\in C$ we must have $x\in C'$ for some $C'\in\Sub(C,\cT)$. But $C'\notin\cT'$, so $C'\cap B'\neq\emptyset$ for some $B'\in\cS'$. Now $C'\subseteq B'$, whence $x\in\bigcup\cS'$. If on the other hand $x\in X$, then $x\in B'$ for some $B'\in\Lev_0(\cS)$. Suppose $B'\notin\cS'$. Since $B'\subsetneq C$ by choice of $C$, it follows that $B'\subseteq C'$ for some $C'\in\Sub(C,\cT)$. Since $x\in C'$ but not $\bigcup\cT'$, $C'\cap B''\neq\emptyset$ for some $B''\in\cS'$. Now $B'\subsetneq B''$ contradicts $B'\in\Lev_0(\cS)$, and the claim is proven.

Next, let
\begin{align*}
\cS^*=& \left(\cS\setminus\cS'\right)\cup\left\{C\right\}\cup\cT'\\
\cT^* =& \left(\cT\setminus\left(\left\{C\right\}\cup\cT'\right)\right)\cup\cS',
\end{align*} 
noting that $\cS'\subseteq\Lev_0(\cT^*)$, $C\in\Lev_0(\cS^*)$, and $\cT'\subseteq\Lev_1(\cS^*)$. 

{\bf Claim 2:} $\Ch(\cS^*)=X$.

For $\Ch(\cS^*)=X$, suppose first that $x\in X$. Then $x$ resides in a maximal chain
\begin{equation}
\label{e311}
x\in B_{2n}\subseteq B_{2n-1}\subseteq \ldots \subseteq B_0
\end{equation}
for some $B_i\in\Lev_i(\cS)$, $x\notin\bigcup\Sub(B_{2n},\cS)$. There are several cases to consider:
\begin{itemize}
\item
If $B_0\notin\cS'$, then either 
\begin{itemize}
\item
$B_0\cap C=\emptyset$ in which case $B_i\in\Lev_i(\cS^*)$ as well and $x\in\Ch(\cS^*)$; or
\item
$B_0\subseteq C$ and, since $B_0\notin\cS'$, $B_0\subseteq C'$ for some $C'\in\Sub(C,\cT)$. This $C'$ must be in $\cT'$ since $\bigcup\cS'=C\setminus\bigcup\cT'$. So in this case we have $B_i\in\Lev_{i+2}(\cS^*)$ in (\ref{e311}) and $x\in\Ch(\cS^*)$.
\end{itemize}
\item
If $B_0\in\cS'$, then $B_0\subseteq C$ but $B_0\cap C'=\emptyset$ for all $C'\in\cT'$. It follows that
\[
x\in B_{2n}\subseteq\ldots\subseteq B_1\subseteq C
\]
with $B_i\in\Lev_i(\cS^*)$, and $x\in\Ch(\cS^*)$.
\end{itemize}
This shows $X\subseteq\Ch(\cS^*)$.

For the converse, suppose again (\ref{e311}) but this time with $B_i\in\Lev_i(\cS^*)$, $x\notin\bigcup\Sub(B_{2n},\cS^*)$. If $B_0\in\cS\setminus\cS'$, then by choice of $C$, $B_0\cap C=\emptyset$ and $x\in\Ch(\cS)=X$. The other possibility is that $B_0=C$. Again there are several cases:
\begin{itemize}
\item
If $B_1\in\cT'$, then $B_2\in\Lev_0(\cS)$ and $x\in\Ch(\cS)$.
\item
If $B_1\in\cS\setminus\cT'$, then $B_1\nsubseteq C'$ for any $C'\in\cT'$ but $B_1\notin\cS'$ implies $B_1\notin\Lev_0(\cS)$. Since $B_1\in\Lev_1(\cS^*)$, it follows that $B_1\in\Lev_1(\cS)$ as well. So there is $B\in\Lev_0(\cS)$, $B_1\subseteq B$ and
\[
x\in B_{2n}\subseteq\ldots\subseteq B_1\subseteq B
\] 
with $B_i\in\Lev_i(\cS)$, giving $x\in\Ch(\cS)=X$.
\end{itemize}
We thus have shown $\Ch(\cS^*)=X$.

{\bf Claim 3:} $\Ch(\cT^*)=X$.

Suppose, similarly, $x\in X$ with a maximal chain
\begin{equation}
\label{e312}
x\in C_{2n}\subseteq \ldots \subseteq C_0,
\end{equation}
$C_i\in\Lev_i(\cT)$. So,
\begin{itemize}
\item
If $C_0 = C$, then either
\begin{itemize}
\item
$C_1\in\cT'$, in which case $C_1\cap B'=\emptyset$ for every $B'\in\cS'$, and $C_i\in\Lev_{i-2}(\cT^*)$ for any $i\ge 2$. This shows $x\in\Ch(\cT^*)$.
\item
$C_1\notin\cT'$, so that $x\in C\setminus\bigcup\cT'=\bigcup\cS'$. So $x\in B'$ for some $B'\in\cS'$, and the definition of $\cS'$ gives $C_1\subsetneq B'$. Thus the chain
\[
x\in C_{2n}\subseteq \ldots \subseteq C_1\subseteq B'
\]
with $C_i\in\Lev_i(\cT^*)$ gives $x\in\Ch(\cT^*)$.
\end{itemize}
\item
If $C_0\neq C$, then in (\ref{e312}), $C_i\in\Lev_i(\cT^*)$ as well, so again $x\in\Ch(\cT^*)$.
\end{itemize}

Conversely, suppose (\ref{e312}) but now with $C_i\in\Lev_i(\cT^*)$, $x\notin\bigcup\Sub(C_{2n},\cT^*)$. If $C_0\in\cS'$, then $x\in C\setminus\bigcup\cT'$. Since $C_1\subseteq C_0$, $C_1\cap C'=\emptyset$ for all $C'\in\cT'$. Thus we have in $\cT$ the (maximal) chain
\[
x\in C_{2n}\subseteq\ldots\subseteq C_1\subseteq C
\]
with $C_i\in\Lev_i(\cT)$.

On the other hand, suppose $C_0\in\cT\setminus\cS'$. Then since $C_0\in\Lev_0(\cT^*)$, $C_0\cap B'=\emptyset$ for all $B'\in\cS'$. It follows that either $C_0\cap C=\emptyset$, or $C_0\subseteq C'$ for some $C'\in\cT'$. In the first case, the chain $C_i$ is the same in $\cT$ and $x\in\Ch(\cT)$. In the second case, since $C_0\notin\cT'$, the chain becomes
\[
x\in C_{2n}\subseteq\ldots\subseteq C_0\subseteq C'\subseteq C
\]
with $C_i\in\Lev_{i+2}(\cT)$. This shows again that $x\in\Ch(\cT)$, and the claim is proven.

Finally, depending on the relative sizes of $|\cS'|$ and $|\cT'|$, we derive a contradiction to $\unlhd$-minimality of $\cS$ and $\cT$ as follows:
\begin{itemize}
\item
If $|\cS'|<|\cT'|+1$, then $\cT^*$ represents $X$ with strictly fewer balls than $\cT$.
\item
If $|\cS'|>|\cT'|+1$, then $\cS^*$ represents $X$ with strictly fewer balls than $\cS$.
\item
If $|\cS'|=|\cT'|+1=1$, say $\cS'=\left\{B\right\}$, then $C\setminus\bigcup\cT'=C=B$ contradicts our choice of $C$.
\item
If $|\cS'|=|\cT'|+1\ge 2$, then $\cS^*$ represents $X$ with $N$ balls, but with $\left|\Lev_0(\cS^*)\right|>\left|\Lev_0(\mathcal{S})\right|$.
\end{itemize}

The contradiction gives $\Lev_0(\cS)=\Lev_0(\cT)$, and the result follows. 
\end{proof}

\section{VC-minimality}
\label{s4}

While the previous sections relied purely on the combinatorial properties of directed families, the questions originate in logic with the notion of VC-minimality.

A complete theory $T$ is \emph{VC-minimal} if there is a family of formulas $\Psi$ such that the set
\[
\set{\psi(x,\bar{a})}{\psi(x,\bar{y})\in\Psi, \bar{a}\in\fM^{|\bar{y}|}}
\]
of instances of $\Psi$ form a directed family for every $\fM\models T$; and such that every definable subset of $\fM$ is a boolean combination of instances of $\Psi$. Here we do not distinguish between $\psi(x,\bar{a})$ and the set it defines in a model $\fM$ of $T$. Note moreover that the length of the tuples $\bar{y}$ may vary with $\psi(x,\bar{y})$, but $x$ is exclusively a single variable. $\Psi$ is called a \emph{generating family}. A formula $\delta(x;\bar{z})$ in $T$ is \emph{directed} if for any $\fM\models T$, the set of instances of $\delta$ in $\fM$ forms a directed family.

The terminology around directed families carries over naturally to VC-minimal theories, with the definable subsets of $\fM$ being the constructible sets. As with directed families, for our purposes it will be most convenient to stick to the convention that $x=x\in\Psi$ but $x\neq x\notin\Psi$, i.e. the whole universe is always a ball, the empty set is never a ball.

Likewise, Theorems \ref{t12} and \ref{Thm_CanonicalDecomposition} can be applied immediately to the family of balls generated by any model of a VC-minimal theory.

\subsection{Uniform definability of levels}
In this subsection, we observe that the levels of a canonical decomposition as in \ref{Thm_CanonicalDecomposition} are uniformly definable. This fact will be useful in type counting arguments in VC-minimal theories (see \cite{vince}).

Fix a formula $\phi(x; \bar{y})$ in a VC-minimal theory.  By compactness, there exists a single directed $\delta(x; \bar{z})$ and $N < \omega$ so that all instances of $\phi$ are a boolean combination of at most $N$ instances of $\delta(x; \bar{z})$. (More precisely, compactness gives us finitely many $\psi \in \Psi$, then we can use coding tricks to combine them into a single directed $\delta$.) As we will only be working with instances of $\phi$, we disregard $\Psi$ and work instead in the directed family of instances of $\delta$.

There are only finitely many forests of size at most $N$; call this set $\mathcal{F}_N$.  For each $F \in \mathcal{F}_N$, let $\psi_F(\bar{y})$ denote the formula which says that there exists $\bar{z}_f$ for $f \in F$ so that $\phi(x; \bar{y})$ is equivalent to $\Ch\left( \set{\delta(x; \bar{z}_f)}{f \in F } \right)$ and $\delta(x; \bar{z}_f)$ are ordered so that $f \mapsto \delta(x; \bar{z}_f)$ is an isomorphism of forests.  

Finally, for any $n < N$, let $\gamma_n(x; \bar{y})$ denote the formula that says, for the $\unlhd$-least $F \in \mathcal{F}_N$ such that $\psi_F(\bar{y})$ holds, there exists witnesses $\bar{z}_f$ for $f \in F$ as above such that $\delta(x; \bar{z}_f)$ holds for some $f \in \Lev_n(F)$.  Thus, for any $\bar{b}$, $\gamma_n(x; \bar{b})$ holds if and only if $x$ appears in the $n$th level of a $\unlhd$-minimal decomposition $\set{\delta(x; \bar{c}_f)}{f \in F }$ of $\phi(x;\bar{b})$.  However, by Theorem \ref{Thm_CanonicalDecomposition}, the set $\set{\delta(x; \bar{c}_f)}{f \in F }$ is unique up to $T$-equivalence.  Therefore, $\gamma_n(x; \bar{b})$ holds if and only if $x$ is in the $n$th level of the $\unlhd$-minimal decomposition.  

We summarize in the following theorem:

\begin{thm}\label{Thm_CanonicalLevels}
 If $\phi(x; \bar{y})$ is any formula in a VC-minimal theory $T$, there exists a directed $\delta(x;\bar{z})$, $N_\phi < \omega$ and $\gamma_{\phi,n}(x; \bar{y})$ for all $n < N_\phi$ such that:
 \begin{enumerate}[(i)]
  \item For all $\bar{b}$, $\gamma_{\phi,n}(x;\bar{b})$ is $T$-equivalent to a disjoint union of (at most $N_\phi$) instances of $\delta$.
  \item $\phi(x;\bar{y})$ is $T$-equivalent to 
\[
\bigvee\limits_{n < N_\phi}\left( \gamma_{\phi,2n}(x; \bar{y}) \wedge \neg \gamma_{\phi,2n+1}(x; \bar{y})\right).
\]
 \end{enumerate}
\end{thm}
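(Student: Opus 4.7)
The plan is essentially to make rigorous the construction sketched in the discussion preceding the statement, and to verify that clauses (i) and (ii) hold. The heart of the matter is the uniqueness provided by Theorem \ref{Thm_CanonicalDecomposition}, which ensures that the auxiliary existential quantification in the definition of $\gamma_{\phi,n}$ cannot let different witnesses produce different levels.

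First, I would invoke compactness applied to the VC-minimality of $T$ to extract from the generating family $\Psi$ a single directed formula $\delta(x;\bar{z})$ together with a uniform bound $N_\phi<\omega$ such that every instance of $\phi(x;\bar{y})$ is a boolean combination of at most $N_\phi$ instances of $\delta$. Compactness first yields finitely many $\psi_1,\ldots,\psi_k\in\Psi$ that suffice uniformly; one then combines them into a single $\delta$ by the usual coding trick (e.g., $\bar{z}=(i,\bar{y})$ with the first coordinate selecting which $\psi_i$ is intended). The family of instances of $\delta$ remains directed because the union of instances of the $\psi_i$ already is. From this point on, one works exclusively in the directed family of instances of $\delta$.

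Next, I would formalize the auxiliary formulas. For each $F$ in the finite set $\mathcal{F}_{N_\phi}$ of forests of size at most $N_\phi$, the formula $\psi_F(\bar{y})$ existentially quantifies parameters $\bar{z}_f$ ($f\in F$) asserting both that $f\mapsto\delta(x;\bar{z}_f)$ is a $\subseteq$-isomorphism onto $\{\delta(x;\bar{z}_f):f\in F\}$ and that $\phi(x;\bar{y})$ is $T$-equivalent to $\Ch(\{\delta(x;\bar{z}_f):f\in F\})$. Both requirements are finite conjunctions of atomic inclusions and equivalences, hence first-order. One then defines $\gamma_{\phi,n}(x;\bar{y})$, via a finite disjunction over $F\in\mathcal{F}_{N_\phi}$, to assert that $F$ is the $\unlhd$-least such forest with $\psi_F(\bar{y})$ and that there exist witnesses $\bar{z}_f$ for which $\delta(x;\bar{z}_f)$ holds for some $f\in\Lev_n(F)$.

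The main obstacle is verifying that $\gamma_{\phi,n}(x;\bar{b})$ is insensitive to the existentially chosen witnesses $\bar{z}_f$ at the $\unlhd$-minimal $F$, so that it genuinely picks out points in the $n$-th level of the canonical decomposition. Any two witness families realize $\phi(x;\bar{b})$ as $\Ch(\cS)$ and $\Ch(\cT)$ with $\cS$ and $\cT$ both $\unlhd$-minimal among representatives, and Theorem \ref{Thm_CanonicalDecomposition} forces $\cS=\cT$; in particular, every level agrees. Granting this, clause (i) is immediate: fixing a witness, $\gamma_{\phi,n}(x;\bar{b})$ is literally $\bigvee_{f\in\Lev_n(F)}\delta(x;\bar{z}_f)$, a disjoint union of at most $N_\phi$ instances of $\delta$, since distinct members of a single level in a directed family are $\subseteq$-incomparable and hence disjoint. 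For clause (ii), the canonical decomposition expresses $\phi(x;\bar{b})$ as $\bigcup_{B\in\Lev_{2n}(\cS)}\bigl(B\setminus\bigcup\Sub(B,\cS)\bigr)$; since $x$ belongs to at most one ball at each level, for $x\in B\in\Lev_{2n}(\cS)$ one has $\gamma_{\phi,2n+1}(x;\bar{b})$ iff $x\in\bigcup\Sub(B,\cS)$, giving exactly $\phi(x;\bar{y})\equiv\bigvee_{n<N_\phi}\bigl(\gamma_{\phi,2n}(x;\bar{y})\wedge\neg\gamma_{\phi,2n+1}(x;\bar{y})\bigr)$.
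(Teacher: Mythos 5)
Your proposal is correct and follows essentially the same route as the paper: compactness plus a coding trick to obtain a single directed $\delta$ and bound $N_\phi$, the finite set $\mathcal{F}_{N_\phi}$ of forests with the auxiliary formulas $\psi_F$ and $\gamma_{\phi,n}$ quantifying over witnesses at the $\unlhd$-least $F$, and Theorem \ref{Thm_CanonicalDecomposition} guaranteeing that the choice of witnesses is immaterial. Your explicit verifications of (i) (disjointness of same-level balls) and (ii) (a point of a level-$2n$ ball lies in a level-$(2n+1)$ ball only if that ball is among its $\Sub$-holes) are exactly the details the paper leaves to the reader.
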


\subsection{VC-minimality and imaginaries}

In this subsection, we outline an application which extends the analogy to Holly's work with algebraically closed valued fields \cite{hol2}. The main observation is that the canonical representation of definable sets from \ref{Thm_CanonicalDecomposition} leads to a one-dimensional elimination of imaginaries.

A theory is said to \emph{eliminate imaginaries} if, for every model $\fM$, $n\in\bbN$, and definable set $X\subseteq \fM^n$, there is a formula $\phi(\bar{x};\bar{y})$ and tuple $\bar{a}\in\fM^{|\bar{y}|}$ such that for all $\bar{b}\in\fM^{|\bar{y}|}$, $\phi(\bar{x};\bar{b})$ defines $X$ iff $\bar{b}=\bar{a}$. In this case, $\bar{a}$ is called a \emph{code} (or \emph{canonical parameter}) of $X$. The existence of codes allows one, in a sense, to treat definable sets as elements of the model. See \cite{poi} for further discussion.

It should be noted that it is always possible to expand a model $\fM$ to a (usually multi-sorted) structure $\fM^{\text{eq}}$ which eliminates imaginaries by explicitly adding to the language a code for every definable set. This suffices for many applications, but in other situations one may find a better understanding of the definable sets in a structure by finding a way to expand the language to achieve elimination of imaginaries in a more efficient, or natural, way.

The notion of codes also specializes naturally to definable sets of a certain dimension. To this end, say a theory has \emph{$n$-prototypes} if there is a family $\Phi=\left\{\phi(\bar{x};\bar{y})\right\}$ with $|\bar{x}|=n$ such that for every model $\fM$ and every definable set $X\subseteq\fM^n$, there is exactly one $\phi\in\Phi$ and $\bar{a}\in\fM^{|\bar{y}|}$ such that $\phi(\bar{x};\bar{a})$ defines $X$. Holly proves in \cite{hol2} that a theory eliminates imaginaries iff it has $n$-prototypes for every $n\ge 1$. It is also clear from the proof that a theory has $1$-prototypes iff every definable subset of a model (in one variable) has a code.

Returning to VC-minimal theories, the definable sets in more than one variable are not yet well understood. The favorite example of algebraically closed valued fields indicates that the situation can be quite complex (see for instance \cite{hhm}). However, on the question of $1$-prototypes, the work of the preceding sections does the trick. We present this in two forms.

Suppose $T$ is VC-minimal, with generating family $\Psi$. Expand the language of $T$ to add to any model $\fM\models T$ a new sort consisting of the finite sets of balls. Add also, for each $\langle\psi_1(x;\bar{y}_1),\ldots,\psi_n(x;\bar{y}_n)\rangle\in\Psi^n$, a new function symbol from the main sort to the new sort,
\[
f_{\left[\psi_1,\ldots,\psi_n\right]}\left(\bar{y}_1,\dots,\bar{y}_n\right):\langle\bar{a}_1,\ldots,\bar{a}_n\rangle\mapsto\left\{B_1,\ldots,B_n\right\}
\]
taking the tuple of parameters $\langle\bar{a}_1,\ldots,\bar{a}_n\rangle$ to the set of balls defined by $\psi_i(x;\bar{a}_i)=B_i$. Let $T^*$ be the theory of a model of $T$ expanded in this way.

\begin{thm}
\label{t51}
If $T$ is VC-minimal, then $T^*$ has $1$-prototypes.
\end{thm}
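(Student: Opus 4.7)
The plan is to use the canonical finite set of balls $\mathcal{S}_X$ provided by Theorem \ref{Thm_CanonicalDecomposition} as the code for each definable $X \subseteq \fM$. By Lemma \ref{l11} applied to the directed family of $\Psi$-instances in $\fM$, every definable $X$ has the form $\Ch(\mathcal{S})$ for some finite $\mathcal{S} \subseteq \cB$, and Theorem \ref{Thm_CanonicalDecomposition} selects a unique $\unlhd$-minimal such $\mathcal{S}_X$. Since $\mathcal{S}_X$ is a finite set of balls, it lives naturally in the new sort of $T^*$; writing $\mathcal{S}_X = \{B_1,\ldots,B_n\}$ with $B_i$ defined by $\psi_i(x;\bar{a}_i)$ for some $\psi_i \in \Psi$, one has $\mathcal{S}_X = f_{[\psi_1,\ldots,\psi_n]}(\bar{a}_1,\ldots,\bar{a}_n)$.

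For the prototype family $\Phi$, I would include one formula $\phi_{\bar{\psi}}$ for each finite tuple $\bar{\psi} = \langle\psi_1,\ldots,\psi_n\rangle$ of formulas from $\Psi$, restricting attention to a canonical set of representatives (for instance, the lex-least reordering, once a fixed well-ordering of $\Psi$ is in place). The formula $\phi_{\bar{\psi}}(x;u)$, with $u$ a variable in the new sort, asserts that (a) $u$ lies in the range of $f_{\bar{\psi}}$ and $\bar{\psi}$ is the canonical choice for $u$; (b) $u$ is $\unlhd$-minimal among all $u'$ in the new sort with $\Ch(u') = \Ch(u)$; and (c) $x \in \Ch(u)$. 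Given any definable $X$, the pair $(\bar{\psi},\mathcal{S}_X)$ with $\bar{\psi}$ canonical for $\mathcal{S}_X$ makes $\phi_{\bar{\psi}}(x;\mathcal{S}_X)$ define $X$, and Theorem \ref{Thm_CanonicalDecomposition} forces this pair to be unique: any $u$ satisfying (b) with $\Ch(u)=X$ must equal $\mathcal{S}_X$, and then $\bar{\psi}$ is determined by $u$.

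The main obstacle is verifying that (a)--(c) are first-order expressible in $T^*$. Condition (c) unpacks ``$x \in \Ch(u)$'' through the $\subseteq$-level structure of $u$: the functions $f_{\bar{\psi}}$ provide main-sort parameters naming the balls of $u$, so membership in an individual ball is testable by the instance $\psi_i(x;\bar{a}_i)$, and the even/odd-level alternation defining $\Ch$ can be encoded directly. For (b) one quantifies over competing $u'$ in the new sort; the crucial observation here, in the spirit of Theorem \ref{Thm_CanonicalLevels}, is that by compactness, for a fixed $\phi_{\bar{\psi}}$ the size of any alternative representative $u'$ is bounded a priori, so $\unlhd$-minimality reduces to a finitary condition expressible by existential quantification over bounded main-sort tuples. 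With this confirmed, the family $\Phi$ furnishes the desired $1$-prototypes for $T^*$.
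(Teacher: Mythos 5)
Your overall strategy matches the paper's: take the $\unlhd$-minimal set of balls $\cS_X$ guaranteed unique by Theorem \ref{Thm_CanonicalDecomposition} as the code, viewed as an element of the new sort via $f_{[\psi_1,\ldots,\psi_n]}$. The divergence, and the gap, is in how the prototype formula is built. You try to write $\unlhd$-minimality (your condition (b)) and a canonical choice of generating tuple (your condition (a)) \emph{into} the formula, and this is exactly where first-order expressibility breaks down. For (b) you must quantify over arbitrary elements $u'$ of the new sort and assert $\Ch(u')=\Ch(u)$ together with a comparison of sizes and level sizes; but the language of $T^*$ reaches the new sort only through the function symbols $f_{[\chi_1,\ldots,\chi_m]}$, and the balls of a competing representative may be instances of arbitrary members of $\Psi$, which is in general infinite. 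Saying ``there exist $\chi_1,\ldots,\chi_m\in\Psi$ and parameters with $f_{[\chi_1,\ldots,\chi_m]}(\bar{z}_1,\ldots,\bar{z}_m)=u'$ and \dots'' is an infinite disjunction, not a formula. Your compactness remark does not repair this: the size of a relevant competitor is bounded by $|u|\le n$ for trivial reasons (larger forests are never $\unlhd$-below), so the issue was never size; it is which generating formulas the competitor's balls instantiate, and the compactness trick of Theorem \ref{Thm_CanonicalLevels} only bundles the instances of a \emph{fixed} formula into a single $\delta$ --- restricting (b) to $\delta$-representations changes the minimality notion and reopens the possibility that distinct prototypes $\phi_{\bar\psi},\phi_{\bar\psi'}$ both define the same $X$. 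Condition (a) has the same flavor of problem: ``lex-least $\bar\psi$ with $u$ in the range of $f_{\bar\psi}$'' may require excluding infinitely many smaller tuples. Since the entire content of Theorem \ref{t51} is that such first-order formulas exist, this is a genuine gap rather than a technicality.

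The paper's proof avoids all of this by never expressing minimality in the object language. Its formula $\phi(x;\mathcal{Y})$ only says: for every tuple in $f_{[\psi_1,\ldots,\psi_n]}^{-1}(\mathcal{Y})$, \emph{if} the resulting balls form a forest isomorphic to the fixed finite shape $(\cS,\subseteq)$, \emph{then} $x$ lies in the corresponding cheese. This is first-order because the forest shape is a fixed finite object and only main-sort tuples of fixed length are quantified. Minimality enters only in the metatheory: if $\phi(x;\cT)$ defines $X\neq\fM$, the implication cannot be vacuous, so $(\cT,\subseteq)\cong(\cS,\subseteq)$ and $\Ch(\cT)=X$; isomorphic forests are $\unlhd$-equivalent, so $\cT$ is also $\unlhd$-minimal and Theorem \ref{Thm_CanonicalDecomposition} gives $\cT=\cS$. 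Uniqueness of the \emph{formula} (your cross-prototype worry, and the role of your lex-least device) is then handled by reducing $1$-prototypes to the existence of codes via Holly's equivalence, rather than by building the selection into the syntax. If you adopt this conditional formulation and the code-to-prototype reduction, your argument goes through; note also that, just as the paper treats $X=\fM$ separately, your conjunctive formulation would need a separate treatment of the degenerate case where conditions fail (your formula then defines $\emptyset$, which would otherwise acquire many representations).
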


\begin{proof}
We show only that every definable set $X\subseteq\fM$ has a code. Assume $X\neq\fM$; that $\fM$ has a code is obvious. By Theorem \ref{Thm_CanonicalDecomposition}, there is a finite set $\cS=\left\{B_1,\ldots,B_n\right\}$ of balls so that $\Ch(\cS)=X$ and if $\Ch(\cT)=X$ and $\left(\cT,\subseteq\right)\cong\left(\cS,\subseteq\right)$ (as forests), then $\cS=\cT$. 

Now, for $i\le n$ let $B_i=\psi_i(x;\bar{a}_i)$, $\psi\in\Psi$. Let $\phi(x;\mathcal{Y})$ be the formula stating that
\[
\forall\langle\bar{y}_1,\ldots,\bar{y}_n\rangle\in  f_{\left[\psi_1,\ldots,\psi_n\right]}^{-1}(\mathcal{Y})
\left(
\begin{array}{l}
\left(\left\{\psi_i(x;\bar{y}_i)\right\}_{i\le n},\subseteq\right)\cong\left(\cS,\subseteq\right)\  \rightarrow  \\
\hspace{1in}x\in\Ch\left(\left\{\psi_i(x;\bar{y}_i)\right\}_{i\le n}\right)
\end{array}
\right).
\]
Since $X\neq\fM$, if $\phi(x;\cT)$ defines $X$ then $(\cT,\subseteq)\cong(\cS,\subseteq)$, and hence $\cT=\cS$. So $\cS$ is in fact a code for $X$.
\end{proof}

This language is complicated somewhat by the need to allow for all finite sets of balls. This is necessary, as it is not generally possible, for example, to distinguish $\langle B_1, B_2\rangle$ from $\langle B_2, B_1\rangle$ in terms of the definable set represented by this pair of balls. This phenomenon is commonplace enough to earn its own terminology.

A theory \emph{weakly eliminates imaginaries} if for every model $\fM$, $n\in\bbN$, and definable set $X\subseteq\fM^n$, there is a formula $\phi(\bar{x};\bar{y})$ and nonempty finite set $A\subseteq\fM^{|\bar{y}|}$ such that for all $\bar{b}\in\fM^{|\bar{y}|}$, $\phi(\bar{x};\bar{b})$ defines $X$ iff $\bar{b}\in A$. Analogously, a theory has \emph{weak $1$-prototypes} if there is a family $\Phi$ such that for every model $\fM$ and every definable set $X\subseteq\fM$, there is exactly one $\phi(x;\bar{y})\in\Phi$ and finitely many $\bar{a}\in\fM^{|\bar{y}|}$ such that $\phi(x;\bar{a})$ defines $X$ .

Now from a VC-minimal $T$ construct an expanded theory $T^\circ$ by adding a new sort consisting of the balls, as well as for each $\psi\in\Psi$ a new function symbol
\[
f_\psi:\bar{a}\mapsto B=\psi(x,\bar{a}).
\] 
As the codes from Theorem \ref{t51} depended only on the set of balls $\left\{B_1,\ldots,B_n\right\}$, we may replace them with an ordered tuple of balls $\langle B_1,\ldots,B_n\rangle$ at the expense of allowing as many as $n!$ potential codes rather than only one. We thus obtain

\begin{cor}
\label{c42}
If $T$ is VC-minimal, then $T^\circ$ has weak $1$-prototypes.
\end{cor}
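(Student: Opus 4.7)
The plan is to reproduce the argument of Theorem \ref{t51} in the language of $T^\circ$, exploiting the fact that balls are now first-class elements of a sort. Whereas Theorem \ref{t51} produced a unique set-valued code for each definable $X\subseteq\fM$, namely the $\unlhd$-minimal $\cS$ given by Theorem \ref{Thm_CanonicalDecomposition}, I would instead use an ordered enumeration $\langle B_1,\ldots,B_n\rangle$ of this set, accepting that its $n!$ reorderings all serve as codes for $X$.

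For each $n\ge 0$ I would define a formula $\phi_n(x;y_1,\ldots,y_n)$ of $T^\circ$, with the $y_i$ ranging over the ball sort, expressing that the $y_i$ are pairwise distinct, that $\{y_1,\ldots,y_n\}$ is the $\unlhd$-minimal representative of $\Ch(\{y_1,\ldots,y_n\})$, and that $x\in\Ch(\{y_1,\ldots,y_n\})$. Declare $\phi_0$ to be $x\neq x$, coding $\emptyset$ by its unique empty parameter tuple, and take $\Phi=\{\phi_n:n\in\bbN\}$. For any nonempty definable $X$ with minimal representative $\cS=\{B_1,\ldots,B_n\}$, Theorem \ref{Thm_CanonicalDecomposition} implies that the parameter tuples with $\phi_n(x;\cdot)$ defining $X$ are exactly the $n!$ enumerations of $\cS$; moreover for $m\neq n$, no tuple of length $m$ can both satisfy $\phi_m$ and define $X$, since any such tuple would yield a minimal representative of $X$ of the wrong cardinality, contradicting uniqueness in Theorem \ref{Thm_CanonicalDecomposition}.

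The one technical point to verify is that the clauses defining $\phi_n$ are genuinely first-order in $T^\circ$. Subset inclusion between balls is definable via the functions $f_\psi$, so both the $\subseteq$-forest structure of $\{y_1,\ldots,y_n\}$ and membership in $\Ch(\{y_1,\ldots,y_n\})$ are expressible, in the spirit of the formulas $\psi_F$ constructed in the paragraph preceding Theorem \ref{Thm_CanonicalLevels}. Because $\unlhd$ compares cardinalities first, $\unlhd$-minimality of $\{y_1,\ldots,y_n\}$ among all representatives of its own swiss cheese only needs to quantify over ball tuples of length at most $n$, which reduces to a finite combinatorial comparison of forest shapes on at most $n$ vertices. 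Once this expressibility is in hand, the weak $1$-prototype property follows immediately from the uniqueness of the minimal representative.
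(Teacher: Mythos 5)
Your overall strategy---pass from the unique set-valued code of Theorem \ref{t51} to ordered enumerations of the $\unlhd$-minimal representative, accepting $n!$ codes---is exactly the paper's route to Corollary \ref{c42}. The gap is in the step you yourself flag as ``the one technical point'': the formulas $\phi_n(x;y_1,\ldots,y_n)$ are not, in general, formulas of the language of $T^\circ$. The only access $T^\circ$ gives to the ball sort is through the function symbols $f_\psi$, and a single first-order formula can mention only finitely many of them; if the generating family $\Psi$ is infinite, then ``$x\in y$'' and ``$y_i\subseteq y_j$'' for \emph{arbitrary} ball-sort elements would require an infinite disjunction over $\psi\in\Psi$ (saying $y$ is $f_\psi$ of some tuple satisfying $\psi$ at $x$). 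This is why the proof of Theorem \ref{t51} does not work with ball variables abstractly but fixes a tuple $\langle\psi_1,\ldots,\psi_n\rangle\in\Psi^n$ and quantifies universally over $f_{[\psi_1,\ldots,\psi_n]}$-preimages; the prototype family must be indexed by such $\Psi$-tuples (and a forest shape), not just by $n$.

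The problem is compounded by your clause asserting that $\{y_1,\ldots,y_n\}$ is the $\unlhd$-minimal representative of its own swiss cheese. Even after fixing which $\psi$'s produce the parameters $y_1,\ldots,y_n$, a competing representative $\cT$ with $\Ch(\cT)=\Ch(\{y_1,\ldots,y_n\})$ and $\cT\lhd\{y_1,\ldots,y_n\}$ may consist of instances of arbitrary members of $\Psi$, so the quantification ``over ball tuples of length at most $n$'' cannot be written down: expressing $\Ch(\cT)=\Ch(\cS)$ needs the membership relation for those arbitrary balls, which is exactly what the language lacks. (The compactness reduction to a single directed $\delta$ in the paragraph before Theorem \ref{Thm_CanonicalLevels} is carried out per formula $\phi(x;\bar y)$, not globally, so it does not rescue this.) The paper's proof sidesteps both issues by never asserting minimality inside the formula: the prototype only says ``for every preimage tuple whose ball set has the fixed forest shape of $\cS$, $x$ lies in the resulting cheese,'' and uniqueness of the parameter---up to the $n!$ permutations, in the $T^\circ$ version---is then obtained at the meta level from Theorem \ref{Thm_CanonicalDecomposition}. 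If you replace your internal minimality clause by this device and index your formulas by tuples from $\Psi$ together with a forest shape, your argument becomes the paper's.
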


\subsection{Quasi-VC-minimality}

A theory $T$ is called \emph{quasi-VC-minimal} if there exists a family $\Psi$ such that the set of all instances of formulas from $\Psi$ is directed and every parameter-definable subset of a model of $T$ is a boolean combination of instances of $\Psi$ and $\emptyset$-definable sets. An example of a quasi-VC-minimal but not VC-minimal theory is Presburger arithmetic, $\Th\left(\bbZ;+,<\right)$. See \cite{fg} for details.

We outline how the above results can be adapted to apply to quasi-VC-minimal theories. As the main differences in the proofs are notational annoyances, these are omitted.

Given $\fM\models T$ and $\emptyset$-definable $Q\subseteq\fM$, the restriction of the balls to $Q$, $\set{B\cap Q}{\text{$B$ a ball}}$ is again a directed family. Thus a boolean combination of balls intersected with $Q$  admits a canonical, $\unlhd$-minimal swiss cheese decomposition as in Theorem \ref{Thm_CanonicalDecomposition}. Here, the balls themselves are not uniquely defined, but only their intersection with $Q$. For a finite set $\cS=\{B_i\}_i$ of balls, write $\cS\cap Q=\{B_i\cap Q\}_i$.

Now, given a formula $\phi(x;\bar{y})$, there are formulas $\theta_1(x),\ldots,\theta_k(x)$ over $\emptyset$ such that every instance of $\phi$ is a boolean combination of balls and $\theta_1,\ldots,\theta_k$. For $e:\{1,\ldots,k\}\rightarrow\{0,1\}$, write
\[
\theta^e(x)=\bigwedge_{i=1}^k\theta_i(x)^{e(i)}.
\]
Then as in \ref{l11} it is proved that every instance of $\phi$ can be written as 
\[
\bigvee_e\left(\theta^e(x)\wedge \sigma_e(x,\bar{a}_e)\right)
\] 
where $\sigma_e$ defines a swiss cheese decomposition. Again, the balls in this swiss cheese decomposition are not uniquely determined, but by \ref{Thm_CanonicalDecomposition} they can be chosen so that their intersections with $\theta^e$ are:

\begin{thm}
If $X\subseteq\fM$ is definable, then there are pairwise disjoint $\emptyset$-definable $Q_1,\ldots,Q_k\subseteq\fM$ partitioning $\fM$ and, for each $i\leq k$ a finite set of balls $\cS_i$ such that
\begin{enumerate}[(i)]
\item
$X\cap Q_i = \Ch\left(\cS_i\cap Q_i\right)$, and
\item
for any $\cT$, if also
$X\cap Q_i = \Ch\left(\cT\cap Q_i\right)$
and $\left(\cT\cap Q_i,\subseteq\right)\cong\left(\cS_i\cap Q_i,\subseteq\right)$,
then $\cT\cap Q_i=\cS_i\cap Q_i$.
\end{enumerate}
\end{thm}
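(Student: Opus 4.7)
The plan is to reduce the result to Theorem \ref{Thm_CanonicalDecomposition} by partitioning $\fM$ into $\emptyset$-definable cells on which the balls, once restricted, form an ordinary directed family with a distinguished universe.

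First I would use quasi-VC-minimality to pick formulas $\theta_1(x),\ldots,\theta_k(x)$ over $\emptyset$ such that any instance of $\phi$ defining $X$ is a boolean combination of balls together with the $\theta_j$. Setting $\theta^e(x)=\bigwedge_{j=1}^k\theta_j(x)^{e(j)}$ for $e:\{1,\ldots,k\}\to\{0,1\}$, let $Q_1,\ldots,Q_m$ enumerate the nonempty sets defined by the $\theta^e$; these are pairwise disjoint, $\emptyset$-definable, and partition $\fM$. By the Lemma \ref{l11} analogue noted in the text, we may write
\[
X=\bigvee_e\left(\theta^e(x)\wedge\sigma_e(x,\bar{a}_e)\right)
\]
so that on each $Q_i$ the set $X\cap Q_i$ is a boolean combination (via $\sigma_e$) of balls intersected with $Q_i$.

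Second I would verify that the restricted family $\cB_i=\set{B\cap Q_i}{B\in\cB,\ B\cap Q_i\neq\emptyset}$ is directed with universe $Q_i$. This is immediate: given two elements, their lifts to $\cB$ are nested or disjoint, and both properties survive intersection with $Q_i$; moreover $Q_i=\fU\cap Q_i\in\cB_i$. Apply Lemma \ref{l11} inside $\cB_i$ to $X\cap Q_i$ to obtain a swiss cheese decomposition $\Ch(\mathcal{A}_i)$ for some finite $\mathcal{A}_i\subseteq\cB_i$; then select $\mathcal{A}_i$ to be $\unlhd$-minimal among all such representatives (this exists as $\unlhd$ is well-founded on finite forests). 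Finally, for each element of $\mathcal{A}_i$ lift it arbitrarily to a ball in $\cB$, producing a set $\cS_i\subseteq\cB$ with $\cS_i\cap Q_i=\mathcal{A}_i$, establishing (i).

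For uniqueness (ii), suppose $\cT$ satisfies $\Ch(\cT\cap Q_i)=X\cap Q_i$ and $(\cT\cap Q_i,\subseteq)\cong(\cS_i\cap Q_i,\subseteq)$. Since the quasi-order $\unlhd$ is determined entirely by the level-size sequence, which is preserved under any isomorphism of forests, $\cT\cap Q_i$ is also $\unlhd$-minimal in $\cB_i$. Theorem \ref{Thm_CanonicalDecomposition} applied to $\cB_i$ then forces $\cT\cap Q_i=\cS_i\cap Q_i$, which is exactly (ii).

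The main obstacles are purely bookkeeping: checking that $\cB_i$ is genuinely directed with universe $Q_i$, and that Lemma \ref{l11} and Theorem \ref{Thm_CanonicalDecomposition} transfer verbatim to it (their proofs use only directedness and the presence of the universe). As the text indicates, these adaptations amount to notational changes rather than new arguments, so nothing beyond the restriction construction and an appeal to Theorem \ref{Thm_CanonicalDecomposition} is needed.
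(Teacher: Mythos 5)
Your proposal is correct and follows essentially the same route the paper indicates: partition $\fM$ by the nonempty $\theta^e$'s, observe that the balls restricted to each piece again form a directed family containing its universe, and then invoke the analogues of Lemma \ref{l11} and Theorem \ref{Thm_CanonicalDecomposition} for the restricted family, with $\unlhd$-minimality transferred across forest isomorphisms to get uniqueness. The paper omits these details as notational, and your write-up supplies exactly the intended argument.
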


Finally, for quasi-VC-minimal $T$, let $T^\sharp$ be the theory obtained by adding 
\begin{itemize}
\item
a new sort consisting of the intersections of balls with $\emptyset$-definable sets, and
\item
for each $\psi(x;\bar{y})\in\Psi$ and each formula $\theta$ over $\emptyset$, a new function symbol
\[
f_{\left[\theta,\psi\right]}(\bar{y}): \bar{a}\mapsto \theta(x)\wedge\phi(x;\bar{a}).
\]
\end{itemize}
We then obtain as in \ref{c42}:

\begin{cor}
$T^\sharp$ has weak 1-prototypes.
\end{cor}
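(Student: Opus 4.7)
The plan is to mirror the construction used in Theorem~\ref{t51} and Corollary~\ref{c42}, replacing the unpartitioned swiss cheese decomposition by the partitioned one supplied by the preceding theorem. Fix a definable $X\subseteq\fM$; if $X=\fM$ it is coded by the empty tuple, so assume $X\neq\fM$. Apply the preceding theorem to obtain pairwise disjoint $\emptyset$-definable $Q_1,\dots,Q_k$ partitioning $\fM$ together with finite ball sets $\cS_1,\dots,\cS_k$ such that $X\cap Q_i=\Ch(\cS_i\cap Q_i)$, and each collection of intersections $\cS_i\cap Q_i$ is uniquely determined (up to reordering compatible with a fixed forest type) by $X\cap Q_i$. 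Write each $B\in\cS_i$ as $\psi(x;\bar{a})$ for $\psi\in\Psi$, and let $\theta_i(x)$ be a $\emptyset$-formula defining $Q_i$. Then in $T^\sharp$ the new-sort element $f_{[\theta_i,\psi]}(\bar{a})$ is precisely the intersection $Q_i\cap B$, which depends only on the intersection, not on the particular $B$ or the particular $\psi$ chosen to represent it.

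The prototype family $\Phi$ is parametrized by the finite data $(k,\theta_1,\dots,\theta_k,n_1,\dots,n_k,F_1,\dots,F_k)$, where $n_i=|\cS_i|$ and $F_i$ is the isomorphism type of the forest $(\cS_i\cap Q_i,\subseteq)$. For each such choice of data, the corresponding $\phi(x;\bar{Y})\in\Phi$ partitions $\bar{Y}$ into blocks $\bar{Y}_1,\dots,\bar{Y}_k$ of lengths $n_1,\dots,n_k$ drawn from the new sort, and asserts: the entries of $\bar{Y}_i$ all lie in $Q_i$, the induced $\subseteq$-forest on $\bar{Y}_i$ is isomorphic to $F_i$, and $x\in\Ch(\bar{Y}_i)$ for the unique $i$ with $x\in Q_i$. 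The subset relation, $\Ch$, and main-sort membership in new-sort elements are all first-order in $T^\sharp$, so $\phi$ is a legitimate formula.

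Uniqueness of the code up to finite ambiguity is then immediate from the uniqueness clause of the preceding theorem: if $\phi(x;\bar{Y})$ defines $X$, then block $i$ of $\bar{Y}$ must list the elements of $\cS_i\cap Q_i$ in some order compatible with $F_i$, and there are at most $\prod_i n_i!$ such orderings. Hence $T^\sharp$ has weak $1$-prototypes. The main technical point will be to verify that the forest-type condition and the $\Ch$-expression can genuinely be stated in terms of the new-sort entries $Q_i\cap B$ rather than the underlying balls $B$; this is the analog of the translation used in Theorem~\ref{t51}, applied to the restricted directed family $\set{B\cap Q_i}{B\text{ a ball}}$ on each piece, and it works because both the $\subseteq$-order and the $\Ch$ operation depend only on the intersected sets.
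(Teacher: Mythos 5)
Your proposal is correct and follows essentially the same route as the paper, which simply invokes the argument of Corollary \ref{c42} (via Theorem \ref{t51}) with the partitioned canonical decomposition of the preceding theorem; your write-up merely makes explicit the details the paper omits as ``notational annoyances,'' namely that the code is the tuple of new-sort elements $Q_i\cap B$ ordered compatibly with the forest types, with at most $\prod_i n_i!$ admissible orderings. The only point left implicit in both accounts is the standard Holly-style reduction from ``every definable $X\subseteq\fM$ has finitely many codes'' to the ``exactly one $\phi\in\Phi$'' clause in the definition of weak $1$-prototypes, which is handled exactly as in the VC-minimal case.
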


\begin{bibdiv}
\begin{biblist}

\bib{adl}{article}{
   author={Adler, Hans},
   title={Theories controlled by formulas of Vapnik-Chervonenkis codimension 1},
   journal={Preprint},
   date={2008}
} 

\bib{cs}{article}{
   author={Cotter, Sarah},
   author={Starchenko, Sergei},
   title={Forking in VC-minimal theories},
   journal={Preprint},
   date={2011}
} 

\bib{fg}{article}{
   author={Flenner, Joseph},
   author={Guingona, Vincent},
   title={Two theories which are not VC-minimal},
   journal={In preparation},
} 

\bib{vince}{article}{
   author={Guingona, Vincent},
   title={Low VC-density in VC-minimal theories},
   journal={In preparation},
} 

\bib{hhm}{article}{
   author={Haskell, Deirdre},
   author={Hrushovski, Ehud},
   author={Macpherson, Dugald},
   title={Definable sets in algebraically closed valued fields: elimination
   of imaginaries},
   journal={J. Reine Angew. Math.},
   volume={597},
   date={2006},
   pages={175--236},
   issn={0075-4102}
}

\bib{hol1}{article}{
   author={Holly, Jan E.},
   title={Canonical forms for definable subsets of algebraically closed and
   real closed valued fields},
   journal={J. Symbolic Logic},
   volume={60},
   date={1995},
   number={3},
   pages={843--860},
   issn={0022-4812}
}

\bib{hol2}{article}{
   author={Holly, Jan E.},
   title={Prototypes for definable subsets of algebraically closed valued
   fields},
   journal={J. Symbolic Logic},
   volume={62},
   date={1997},
   number={4},
   pages={1093--1141},
   issn={0022-4812}
}

\bib{poi}{book}{
   author={Poizat, Bruno},
   title={A course in model theory},
   series={Universitext},
   note={An introduction to contemporary mathematical logic;
   Translated from the French by Moses Klein and revised by the author},
   publisher={Springer-Verlag},
   place={New York},
   date={2000},
   pages={xxxii+443},
   isbn={0-387-98655-3}
}

\end{biblist}
\end{bibdiv}

\end{document}